\documentclass{article}
\usepackage{iclr2019_conference,times}
\iclrfinalcopy

\usepackage{amsmath,amsfonts,bm}




\def\Figref#1{Figure~\ref{#1}}


\def\Secref#1{Section~\ref{#1}}


\def\eqref#1{equation~\ref{#1}}
\def\Eqref#1{Eq. (\ref{#1})}








\def\1{\bm{1}}










\DeclareMathAlphabet{\mathsfit}{\encodingdefault}{\sfdefault}{m}{sl}
\SetMathAlphabet{\mathsfit}{bold}{\encodingdefault}{\sfdefault}{bx}{n}













\usepackage{url}
\usepackage{amsmath}
\usepackage{amssymb}
\usepackage{graphicx}
\usepackage{amsthm}
\usepackage{placeins}
\usepackage[colorlinks=true, allcolors=blue]{hyperref}
\usepackage{color}
\usepackage{tcolorbox}
\usepackage{subcaption}
\DeclareFontFamily{OT1}{pzc}{}
\DeclareFontShape{OT1}{pzc}{m}{it}{<-> s * [1.10] pzcmi7t}{}
\DeclareMathAlphabet{\mathpzc}{OT1}{pzc}{m}{it}
\usepackage{enumitem}

\newcommand{\Hb}{{H}}

\newcommand{\Ac}{{\mathcal{A}}}

\newcommand{\Gc}{{\mathcal{G}}}
\newcommand{\Fc}{{\mathcal{F}}}

\usepackage[scr=boondoxo,scrscaled=1.05]{mathalfa}
\newcommand{\calligraphic}[1]{\mathscr{#1}}
\newcommand{\bc}{{\calligraphic{b}}}
\newcommand{\uc}{{\calligraphic{u}}}
\newcommand{\fc}{{\calligraphic{f}}}
\newcommand{\xc}{x}

\newcommand{\PDEProbCont}{(\Ac, \Gc, \fc, \bc, n)}
\newcommand{\PDEProb}{(A, G, f, b, n)}

\newcommand{\Iterator}{T + G H T - G H}

\newtheorem{theorem}{Theorem}
\newtheorem{lemma}{Lemma}
\newtheorem{definition}{Definition}
\newtheorem{proposition}{Proposition}

\newcommand\numberthis{\addtocounter{equation}{1}\tag{\theequation}}

\title{Learning Neural PDE Solvers with Convergence Guarantees
}


\author{Jun-Ting Hsieh*  \\
Stanford  \\
junting@stanford.edu
\And
Shengjia Zhao* \\
Stanford \\
sjzhao@stanford.edu
\And
Stephan Eismann \\
Stanford \\
seismann@stanford.edu
\AND
Lucia Mirabella \\ 
Siemens \\
lucia.mirabella@siemens.com 
\And  
Stefano Ermon \\ 
Stanford\\
ermon@stanford.edu 
}

%

\begin{document}

\maketitle

\begin{abstract}
Partial differential equations (PDEs) are widely used across the physical and computational sciences. Decades of research and engineering went into designing fast iterative solution methods.  
Existing solvers are general purpose, but may be sub-optimal for specific classes of problems. 
In contrast to existing hand-crafted
solutions, we propose
an approach to learn a fast iterative solver 
tailored to a  specific domain.
We achieve this goal by learning to modify the updates of an existing solver using a deep neural network. 
Crucially, our approach is proven to preserve strong correctness and convergence guarantees. 
After training on a single geometry, 
our model generalizes to a wide variety of geometries and boundary conditions, and achieves 2-3 times speedup compared to state-of-the-art solvers.

\end{abstract}

\section{Introduction}

Partial differential equations (PDEs) are ubiquitous tools for modeling physical phenomena, such as heat, electrostatics, and quantum mechanics. Traditionally, PDEs are solved with hand-crafted approaches that iteratively update and improve a candidate solution until convergence. Decades of research and engineering went into designing update rules with fast convergence properties.  

The performance of existing solvers varies greatly across application domains, with no method uniformly dominating the others. Generic solvers are typically effective, but could be far from optimal for specific domains. In addition, high performing update rules could be too complex to design by hand. In recent years, we have seen that for many classical problems, complex updates \emph{learned} from data or experience can out-perform hand-crafted ones. For example, for Markov chain Monte Carlo, learned proposal distributions lead to orders of magnitude speedups compared to hand-designed ones~\citep{song2017nice,levy2017generalizing}. Other domains that benefited significantly include learned optimizers~\citep{andrychowicz2016learning} and learned data structures~\citep{kraska2018case}. Our goal is to bring similar benefits to PDE solvers. 

Hand-designed solvers are relatively simple to analyze and are guaranteed to be correct in a large class of problems. The main challenge is how to provide the same guarantees with a potentially much more complex learned solver. 
To achieve this goal, we build our learned iterator on top of an existing standard iterative solver to inherit its desirable properties. The iterative solver updates the solution at each step, and we learn a parameterized function to modify this update. This function class is chosen so that for any choice of parameters, the fixed point of the original iterator is preserved.
This guarantees correctness, and training can be performed to enhance convergence speed. Because of this design, we only train on a single problem instance; our model correctly generalizes to a variety of different geometries and boundary conditions with no observable loss of performance. As a result, our approach provides: (i) theoretical guarantees of convergence to the correct stationary solution, (ii) faster convergence than existing solvers, and (iii) generalizes to geometries and boundary conditions very different from the ones seen at training time. 
This is in stark contrast with existing deep learning approaches for PDE solving \citep{tang2017study,farimani2017deep} that are limited to specific geometries and boundary conditions, and offer no guarantee of correctness. 

Our approach applies to any PDE with existing linear iterative solvers. As an example application, we solve the 2D Poisson equations. Our method achieves a 2-3$\times$ speedup on number of multiply-add operations when compared to standard iterative solvers, even on domains that are significantly different from our training set. Moreover, compared with state-of-the-art solvers implemented in FEniCS~\citep{logg2012automated}, our method achieves faster performance in terms of wall clock CPU time. Our method is also simple as opposed to deeply optimized solvers such as our baseline in FEniCS (minimal residual method + algebraic multigrid preconditioner). Finally, since we utilize standard convolutional networks which can be easily parallelized on GPU, our approach leads to an additional $30 \times$ speedup when run on GPU.  

\section{Background}
\label{sec:background}

In this section, we give a brief introduction of linear PDEs and iterative solvers. We refer readers to ~\cite{leveque2007finite} for a thorough review.

\subsection{Linear PDEs}
Linear PDE solvers find functions that satisfy a (possibly infinite) set of linear differential equations. 
More formally, let $\Fc = \{\uc : \mathbb{R}^k \to \mathbb{R} \}$ be the space of candidate functions, and $\mathcal{A}: \mathcal{F} \to \mathcal{F}$ be a linear operator; the goal is to find a function $\uc \in \mathcal{F}$ that satisfies a linear equation $\Ac \uc = \fc$, where $\fc$ is another function $\mathbb{R}^k \to \mathbb{R}$ given by our problem. 
Many PDEs fall into this framework. For example, heat diffusion satisfies $\nabla^2 \uc = \fc$ (Poisson equation), where $\nabla^2 = \frac{\partial^2}{\partial x_1^2} + \cdots + \frac{\partial^2}{\partial x_k^2}$ is the linear Laplace operator; $u$ maps spatial coordinates (e.g. in $\mathbb{R}^3$) into its temperature, and $\fc$ maps spatial coordinates into the heat in/out flow. Solving this equation lets us know the stationary temperature given specified heat in/out flow. 

Usually the equation $\Ac \uc = \fc$ does not uniquely determine $\uc$. For example, $\uc = \mathrm{constant}$ for any constant is a solution to the equation $\nabla^2 \uc = 0$. To ensure a unique solution we provide additional equations, called ``boundary conditions''. Several boundary conditions arise very naturally in physical problems. A very common one is the Dirichlet boundary condition, where we pick some subset $\Gc \subset \mathbb{R}^k$ and fix the values of the function on $\Gc$ to some fixed value $\bc$, 
\[ \uc(\xc) = \bc(\xc), \mathrm{\ for\ all\ } \xc \in \Gc \]
where the function $\bc$ is usually clear from the underlying physical problem. 
As in previous literature, we refer to $\Gc$ as the \textit{geometry} of the problem, and $\bc$ as the \textit{boundary value}. We refer to the pair $(\Gc, \bc)$ as the \textit{boundary condition}. 
In this paper, we only consider linear PDEs and boundary conditions that have unique solutions.

\subsection{Finite Difference Method}
\label{sec:finite_difference}

Most real-world PDEs do not admit an analytic solution and must be solved numerically.
The first step is to discretize the solution space $\mathcal{F}$ from $\mathbb{R}^k \to \mathbb{R}$ into $\mathbb{D}^k \to \mathbb{R}$, where $\mathbb{D}$ is a discrete subset of $\mathbb{R}$. When the space is compact, it is discretized into an $n \times n \times n \cdots$ ($k$ many) uniform Cartesian grid with mesh width $h$. Any function in $\mathcal{F}$ is approximated by its value on the $n^k$ grid points. We denote the discretized function as a vector $u$ in $\mathbb{R}^{n^k}$. In this paper, we focus on 2D problems ($k=2$), but the strategy applies to any dimension. 

We discretize all three terms in the equation $\mathcal{A} \uc = \fc$ and boundary condition $(\Gc, \bc)$.
The PDE solution $\uc$ is discretized such that $u_{i,j} = \uc(x_i, y_j)$ corresponds to the value of $\uc$ at grid point $(x_i, y_j)$. We can similarly discretize $\fc$ and $\bc$. 
In linear PDEs, the linear operator $\Ac$ is a linear combination of partial derivative operators. For example, for the Poisson equation $\Ac = \nabla^2 = \sum_i \frac{\partial^2}{\partial x_i^2}$. Therefore we can first discretize each partial derivative, then linearly combine the discretized partial derivatives to obtain a discretized $\Ac$.

Finite difference is a method that approximates partial derivatives in a discretized space, and as mesh width $h \rightarrow 0$, the approximation approaches the true derivative. For example, $\frac{\partial^2}{\partial x^2} \uc$ can be discretized in 2D as $\frac{\partial^2}{\partial x^2} \uc \approx \frac{1}{h^2}(u_{i-1, j} - 2 u_{i,j} + u_{i+1, j})$, the Laplace operator in 2D can be correspondingly approximated as:
\begin{equation}
    \label{eq:laplace}
    \begin{aligned}
    \nabla^2 \uc 
    = \frac{\partial^2 \uc}{\partial x^2} + \frac{\partial^2 \uc}{\partial y^2} 
    \approx \frac{1}{h^2}(u_{i-1, j} + u_{i+1, j} + u_{i, j-1} + u_{i, j+1} - 4 u_{i,j})
    \end{aligned}
\end{equation}

After discretization, we can rewrite $\Ac \uc = \fc$ as a linear matrix equation
\begin{equation}
\label{eq:discretized}
    Au = f
\end{equation}
where $u, f \in \mathbb{R}^{n^2}$, and $A$ is a matrix in $\mathbb{R}^{n^2 \times n^2}$ (these are $n^2$ dimensional because we focus on 2D problems).
In many PDEs such as the Poisson and Helmholtz equation, $A$ is sparse, banded, and symmetric.

\subsection{Boundary Condition}
\label{sec:boundary_condition}

We also need to include the boundary condition $\uc(x)=\bc(x)$ for all $x \in \Gc$. If a discretized point $(x_i, y_j)$ belongs to $\Gc$, we need to fix the value of $u_{i, j}$ to $b_{i, j}$. 
To achieve this, we first define $e \in \{0,1\}^{n^2}$ to be a vector of 0's and 1's, in which 0 indicates that the corresponding point belongs to $\Gc$.
Then, we define a ``reset'' matrix $G = diag(e)$, a diagonal matrix $\mathbb{R}^{n^2} \to \mathbb{R}^{n^2}$ such that
\begin{equation}
\label{eq:reset_operator}
    (Gu)_{i, j} = 
    \begin{cases}
    u_{i, j} & (x_i, y_j) \not\in \Gc \\
    0 & (x_i, y_j) \in \Gc
    \end{cases}
\end{equation}
Intuitively $G$ "masks" every point in $\Gc$ to $0$. Similarly, $I-G$ can mask every point not in $\Gc$ to $0$. Note that the boundary values are fixed and do not need to satisfy $Au=f$. Thus, the solution $u$ to the PDE under geometry $\Gc$ should satisfy:
\begin{equation}
\label{eq:GAu}
\begin{aligned}
    G(Au) &= Gf \\
    (I - G)u &= (I - G)b
\end{aligned}
\end{equation}
The first equation ensures that the interior points (points not in $\Gc$) satisfy $Au=f$, and the second ensures that the boundary condition is satisfied. 

To summarize, $\PDEProbCont$ is our \textit{PDE problem}, and we first discretize the problem on an $n\times n$ grid to obtain $\PDEProb$. 
Our objective is to obtain a solution $u$ that satisfies \Eqref{eq:GAu}, i.e. $Au = f$ for the interior points and boundary condition $u_{i,j} = b_{i,j},\ \forall (x_i, y_j) \in \Gc$. 


\subsection{Iterative Solvers}
\label{sec:iterative_solvers}

A \emph{linear} iterative solver is defined as a function that inputs the current proposed solution $u \in \mathbb{R}^{n^2}$ and outputs an updated solution $u'$. Formally it is a function $\Psi: \mathbb{R}^{n^2} \to \mathbb{R}^{n^2}$ that can be expressed as 
\begin{equation}
\label{eq:update_rule}
u' = \Psi(u) = T u + c
\end{equation}
where $T$ is a constant update matrix and $c$ is a constant vector. For each iterator $\Psi$ there may be special vectors $u^* \in \mathbb{R}^{n^2}$ that satisfy $u^* = \Psi(u^*)$. These vectors are called fixed points.

The iterative solver $\Psi$ should map any initial $u^0 \in \mathbb{R}^{n^2}$ to a correct solution of the PDE problem. This is formalized in the following theorem.

\begin{definition}[Valid Iterator]
An iterator $\Psi$ is valid w.r.t. a PDE problem $\PDEProb$ if it satisfies:
\begin{enumerate}[label=\alph*)]
  \item \textbf{Convergence:} There is a unique fixed point $u^*$ such that $\Psi$ converges to $u^*$ from any initialization: $\forall u^0 \in \mathbb{R}^{n^2}, \lim_{k \to \infty} \Psi^k(u^0) = u^*$.
  \item \textbf{Fixed Point:} The fixed point $u^*$ is the solution to the linear system $Au=f$ under boundary condition $(G, b)$. 
\end{enumerate}
\label{def:valid}
\end{definition}

\textbf{Convergence:} Condition (a) in Definition \ref{def:valid} is satisfied if the matrix $T$ is \textit{convergent}, i.e. $T^k \to 0$ as $k \to \infty$. It has been proven that $T$ is convergent if and only if the spectral radius $\rho(T) < 1$~\citep{olver2008numerical}:

\def \ThmSpectralRadius {
For a linear iterator $\Psi(u) = Tu + c$, $\Psi$ converges to a unique stable fixed point from any initialization if and only if the spectral radius $\rho(T) < 1$.
}

\begin{theorem}
\label{thm:spectral_radius}
\citep[Prop 7.25]{olver2008numerical}
\ThmSpectralRadius
\end{theorem}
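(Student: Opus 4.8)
The plan is to reduce the statement to the purely linear-algebraic fact that a matrix $T$ is \emph{convergent} (meaning $T^k \to 0$) if and only if $\rho(T) < 1$, and to treat existence and uniqueness of the fixed point separately. First I would observe that if $u^*$ is any fixed point, i.e. $u^* = Tu^* + c$, then for an arbitrary initialization $u^0$ the error $e^k := \Psi^k(u^0) - u^*$ obeys the clean recursion $e^{k+1} = \Psi(u^k) - \Psi(u^*) = T(u^k - u^*) = Te^k$, so that $e^k = T^k e^0$. Hence convergence $\Psi^k(u^0) \to u^*$ from \emph{every} initialization is exactly equivalent to $T^k e^0 \to 0$ for every $e^0$, i.e. to $T^k \to 0$. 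This single observation does most of the bookkeeping and lets me handle both implications through the convergent-matrix condition.

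For the direction $\rho(T) < 1 \Rightarrow$ convergence: since $\rho(T) < 1$, the value $1$ is not an eigenvalue of $T$, so $I - T$ is invertible and there is a unique fixed point $u^* = (I-T)^{-1}c$, giving uniqueness at once. It remains to show $T^k \to 0$. I would pass to the Jordan canonical form $T = SJS^{-1}$, so $T^k = S J^k S^{-1}$ and it suffices to show $J^k \to 0$ block by block. For a single block $J_i = \lambda I + N$ with $|\lambda| \le \rho(T) < 1$ and $N$ nilpotent, say $N^m = 0$, the binomial theorem gives the \emph{finite} sum $J_i^k = \sum_{\ell=0}^{m-1} \binom{k}{\ell} \lambda^{k-\ell} N^\ell$; each coefficient $\binom{k}{\ell}\lambda^{k-\ell}$ is a polynomial in $k$ times the geometric factor $\lambda^k$, which tends to $0$ because $|\lambda| < 1$ dominates any polynomial growth. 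Thus $J_i^k \to 0$, so $J^k \to 0$, $T^k \to 0$, and $e^k = T^k e^0 \to 0$.

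For the reverse direction, assume $\Psi$ converges to a unique fixed point from every initialization. Uniqueness forces the linear system $(I-T)u = c$ to have exactly one solution, so $I-T$ is invertible and $1 \notin \mathrm{spec}(T)$. Convergence from every initialization gives, via the error recursion, $T^k e^0 \to 0$ for all $e^0$, i.e. $T^k \to 0$. If some eigenvalue $\lambda$ satisfied $|\lambda| \ge 1$, then a corresponding eigenvector $v \ne 0$ would give $\|T^k v\| = |\lambda|^k \|v\| \ge \|v\| \not\to 0$, a contradiction; hence every eigenvalue obeys $|\lambda| < 1$, i.e. $\rho(T) < 1$.

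I expect the main obstacle to be controlling the nilpotent part in the forward direction: all eigenvalues lying strictly inside the unit disk does not by itself make $\|T^k\|$ small at every step, since a non-normal $T$ can show transient growth through its generalized eigenvectors. The Jordan-block computation is exactly what tames this, the quantitative heart being that $\binom{k}{\ell}|\lambda|^{k-\ell} \to 0$, i.e. exponential decay beats polynomial growth. An alternative that sidesteps Jordan forms is to invoke, for every $\epsilon > 0$, the existence of an induced matrix norm with $\|T\| \le \rho(T) + \epsilon$; picking $\epsilon$ with $\rho(T) + \epsilon < 1$ then yields $\|T^k\| \le \|T\|^k \to 0$ directly, but this merely relocates the same difficulty into the construction of that norm, equivalently into Gelfand's formula $\rho(T) = \lim_k \|T^k\|^{1/k}$.
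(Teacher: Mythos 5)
Your proof is correct, and it shares the paper's core reduction: both write the error as $e^k = T^k e^0$, turning the theorem into the purely matrix-theoretic claim that $T^k \to 0$ if and only if $\rho(T) < 1$. The two proofs diverge in how they discharge each direction. For the forward direction, the paper does not actually prove that $\rho(T) < 1$ implies $T^k \to 0$; it cites LeVeque for this fact. You prove it outright via the Jordan canonical form and the estimate $\binom{k}{\ell}|\lambda|^{k-\ell} \to 0$, which makes your argument self-contained; this is the main substantive addition. For the reverse direction, the paper argues by contraposition: assuming $\rho(T) \ge 1$, it takes an eigenvector $v_1$ of a largest-modulus eigenvalue, contrasts the initialization $u^0 = u^* + v_1$ (whose error never vanishes) with $\hat{u}^0 = u^*$ (whose error is identically zero), and concludes the two cannot converge to the same fixed point. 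This is more constructive, but it silently assumes a fixed point $u^*$ exists when $\rho(T)\ge 1$, and it glosses over the possibility that $v_1$ is complex, in which case $u^*+v_1$ is not a legitimate initialization in $\mathbb{R}^{n^2}$ (one would have to pass to real and imaginary parts). Your direct argument avoids both issues: existence of $u^*$ is part of the hypothesis, convergence from all real initializations forces $T^k \to 0$ entrywise, and that in turn forces $T^k v = \lambda^k v \to 0$ for any eigenvector, complex or not, contradicting $|\lambda| \ge 1$. In short, the paper's proof is shorter because it outsources the analytic heart to a citation and sketches the converse with an explicit bad initialization; yours is longer but complete and dodges the real-versus-complex subtlety.
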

\begin{proof}
See Appendix \ref{appendix:proofs}.
\end{proof}

It is important to note that Condition (a) only depends on $T$ and not the constant $c$.

\textbf{Fixed Point:} Condition (b) in Definition \ref{def:valid} contains two requirements: satisfy $Au=f$, and the boundary condition $(G, b)$. To satisfy $Au=f$ a standard approach is to design $\Psi$ by matrix splitting: split the matrix $A$ into $A = M - N$; rewrite $Au = f$ as $Mu = Nu + f$~\citep{leveque2007finite}.
This naturally suggests the iterative update
\begin{equation}
\begin{aligned}
\label{eq:matrix_split}
    u' &= M^{-1}N u + M^{-1}f 
\end{aligned}
\end{equation}
Because ~\Eqref{eq:matrix_split} is a rewrite of $Au=f$, stationary points $u^*$ of ~\Eqref{eq:matrix_split} satisfy $Au^* = f$.  
Clearly, the choices of $M$ and $N$ are arbitrary but crucial. From Theorem~\ref{thm:spectral_radius}, we must choose $M$ such that the update converges. In addition, $M^{-1}$ must easy to compute (e.g., diagonal). 

Finally we also need to satisfy the boundary condition $(I-G)u = (I-G)b$ in Eq.\ref{eq:GAu}. After each update in \Eqref{eq:matrix_split}, the boundary condition could be violated. We use the ``reset'' operator defined in \Eqref{eq:reset_operator} to  ``reset'' the values of $u_{i, j}$ to $b_{i, j}$ by $G u+(I-G)b$.

The final update rule becomes
\begin{equation}
\begin{aligned}
\label{eq:matrix_split_g}
    u' &= G(M^{-1}N u + M^{-1}f) + (I - G)b 
\end{aligned}
\end{equation}
Despite the added complexity, it is still a linear update rule in the form of $u'=Tu+c$ in \Eqref{eq:update_rule}: we have $T = G M^{-1}N$ and $c = G M^{-1}f + (1 - G)b$. As long as $M$ is a full rank diagonal matrix, fixed points of this equation satisfies \Eqref{eq:GAu}. In other words, such a fixed point is a solution of the PDE problem $(A, G, f, b, n)$.

\begin{proposition}
\label{prop:fixed_point}
If $M$ is a full rank diagonal matrix, and $u^* \in \mathbb{R}^{n^2 \times n^2}$ satisfies \Eqref{eq:matrix_split_g}, then $u^*$ satisfies \Eqref{eq:GAu}.
\end{proposition}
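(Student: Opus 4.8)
The plan is to exploit two algebraic facts: that the reset matrix $G = \mathrm{diag}(e)$ from \Eqref{eq:reset_operator} is an idempotent projection, and that $G$ commutes with $M^{-1}$ because both are diagonal. A fixed point $u^*$ of \Eqref{eq:matrix_split_g} satisfies $u^* = G M^{-1} N u^* + G M^{-1} f + (I-G)b$, and I would verify the two lines of \Eqref{eq:GAu} separately, by projecting this identity onto the boundary with $(I-G)$ and onto the interior with $G$.

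For the boundary condition, note that $G$ being $0$--$1$ diagonal gives $G^2 = G$, hence $(I-G)G = 0$ and $(I-G)^2 = I-G$. Left-multiplying the fixed point identity by $(I-G)$ annihilates the first two terms (each carries a leading $G$) and leaves $(I-G)u^* = (I-G)b$, the second line of \Eqref{eq:GAu}. For the interior equation, I would instead left-multiply by $G$ and use $G^2=G$, $G(I-G)=0$ to obtain $G u^* = G M^{-1} N u^* + G M^{-1} f$. Substituting the matrix splitting $N = M - A$ and using $M^{-1}M = I$ collapses the first term to $G u^*$, which cancels the left-hand side and yields $G M^{-1} A u^* = G M^{-1} f$. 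Now I invoke diagonality: $G M^{-1} = M^{-1} G$, so this reads $M^{-1} G A u^* = M^{-1} G f$, and left-multiplying by the full-rank $M$ eliminates $M^{-1}$ to give $G A u^* = G f$, the first line of \Eqref{eq:GAu}.

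The step I expect to be the crux is the interior equation, where the projection naturally produces the mixed quantity $G M^{-1} A u^*$ rather than the desired $G A u^*$; one cannot strip $M^{-1}$ in general, since $G$ does not commute with arbitrary matrices. The hypothesis that $M$ is diagonal is exactly what allows me to commute $M^{-1}$ past $G$, and full rank is what allows me to cancel it afterward, so both parts of the assumption enter precisely here. Everything else is bookkeeping with the projection identities, and I would read the ambient space in the statement as $u^* \in \mathbb{R}^{n^2}$ rather than $\mathbb{R}^{n^2 \times n^2}$.
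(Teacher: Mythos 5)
Your proof is correct and follows essentially the same route as the paper's: both decompose the fixed-point identity along the complementary projections $G$ and $I-G$ to read off the boundary equation directly, then use $A = M - N$ to reduce the interior part to $GM^{-1}(Au^*-f)=0$ and invoke diagonality plus full rank of $M$ to strip $M^{-1}$ and conclude $GAu^*=Gf$. Your reading of the ambient space as $\mathbb{R}^{n^2}$ (rather than the statement's $\mathbb{R}^{n^2\times n^2}$, a typo) is also right.
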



\subsubsection{Jacobi Method}
\label{sec:jacobi_method}
A simple but effective way to choose $M$ is the Jacobi method, which sets $M = I$ (a full rank diagonal matrix, as required by Proposition~\ref{prop:fixed_point}). 
For Poisson equations, this update rule has the following form,
\begin{align*}
	\hat{u}_{i,j} &= \frac{1}{4} (u_{i-1,j} + u_{i+1,j} + u_{i,j-1} + u_{i,j+1}) + \frac{h^2}{4} f_{i,j} \numberthis \label{eq:jacobi} \\
	u' &= G \hat{u} + (1-G)b \numberthis \label{eq:jacobi2}
\end{align*} 
For Poisson equations and any geometry $G$, the update matrix $T = G(I - A)$ has spectral radius $\rho(T) < 1$ (see Appendix~\ref{appendix:jacobi_with_G}). In addition, by Proposition~\ref{prop:fixed_point} any fixed point of the update rule Eq.(\ref{eq:jacobi},\ref{eq:jacobi2}) must satisfy \Eqref{eq:GAu}. Both convergence and fixed point conditions from Definition~\ref{def:valid} are satisfied:  Jacobi iterator Eq.(\ref{eq:jacobi},\ref{eq:jacobi2}) is valid for any Poisson PDE problem. 

In addition, each step of the Jacobi update can be implemented as a neural network layer, i.e., \Eqref{eq:jacobi} can be efficiently implemented by convolving $u$ with kernel $\left( \begin{array}{ccc} 0 & 1/4 & 0 \\ 1/4 & 0 & 1/4 \\ 0 & 1/4 & 0 \end{array} \right)$ and adding $h^2f/4$. 
The ``reset'' step in \Eqref{eq:jacobi2} can also be implemented as multiplying $u$ with G and adding the boundary values $(1-G)b$.

\subsubsection{Multigrid Method}
The Jacobi method has very slow convergence rate~\citep{leveque2007finite}. This is evident from the update rule, where the value at each grid point is only influenced by its immediate neighbors. To propagate information from one grid point to another, we need as many iterations as their distance on the grid. The key insight of the Multigrid method is to perform Jacobi updates on a downsampled (coarser) grid and then upsample the results. A common structure is the V-cycle~\citep{briggs2000multigrid}. In each V-cycle, there are $k$ downsampling layers followed by $k$ upsampling layers, and multiple Jacobi updates are performed at each resolution.
The downsampling and upsampling operations are also called \textit{restriction} and \textit{prolongation}, and are often implemented using weighted restriction and linear interpolation respectively.
The advantage of the multigrid method is clear: on a downsampled grid (by a factor of 2) with mesh width $2h$, information propagation is twice as fast, and each iteration requires only 1/4 operations compared to the original grid with mesh width $h$.




\section{Learning Fast and Provably Correct Iterative PDE Solvers}




A PDE problem consists of five components $\PDEProbCont$. One is often interested in solving the same PDE class $\Ac$ under varying $\fc$,  discretization $n$, and boundary conditions $(\Gc, \bc)$.
For example, solving the Poisson equation under different boundary conditions (e.g., corresponding to different mechanical systems governed by the same physics).
In this paper, we fix $\Ac$ but vary $\Gc, \fc, \bc, n$, and learn an iterator that solves a class of PDE problems governed by the same $\Ac$. 
For a discretized PDE problem $\PDEProb$ and given a standard (hand designed) iterative solver $\Psi$, our goal is to improve upon $\Psi$ and learn a solver $\Phi$ 
that has (1) correct fixed point and (2) fast convergence (on average) on the class of problems of interest. We will proceed to parameterize a family of $\Phi$ that satisfies (1) by design, and achieve (2) by optimization. 

In practice, we can only train $\Phi$ on a small number of problems $(A, f_i, G_i, b_i, n_i)$. To be useful, $\Phi$ must deliver good performance on every choice of $G, f, b$, and different grid sizes $n$.
We show, theoretically and empirically, that our iterator family has good generalization properties: even if we train on a single problem $\PDEProb$, the iterator performs well on very different choices of $G, f, b$, and grid size $n$. For example, we train our iterator on a $64 \times 64$ square domain, and test on a $256 \times 256$ L-shaped domain (see Figure \ref{fig:test_settings}).


\subsection{Formulation}
\label{sec:formulation}

For a fixed PDE problem class $\Ac$, let $\Psi$ be a standard linear iterative solver known to be valid. We will use more formal notation $\Psi(u; G, f, b, n)$ as $\Psi$ is a function of $u$, but also depends on $G, f, b, n$. Our assumption is that for any choice of $G, f, b, n$ (but fixed PDE class $\Ac$), $\Psi(u; G, f, b, n)$ is valid. We previously showed that Jacobi iterator Eq.(\ref{eq:jacobi},\ref{eq:jacobi2}) have this property for the Poisson PDE class. 

We design our new family of iterators $\Phi_H: \mathbb{R}^{n^2} \to \mathbb{R}^{n^2}$ as
\begin{equation}
\label{eq:formulation}
    \begin{aligned}
    w &= \Psi(u; G, f, b, n) - u \\
    \Phi_H(u;  G, f, b, n) &= \Psi(u; G, f, b, n) + G Hw 
    \end{aligned}
\end{equation}
where $H$ is a learned linear operator (it satisfies $H0=0$). The term $G H w$ can be interpreted as a correction term to $\Psi(u; G, f, b, n)$. 
When there is no confusion, we neglect the dependence on $G, f, b, n$ and denote as $\Psi(u)$ and $\Phi_H(u)$. 

$\Phi_H$ should have similar computation complexity as $\Psi$.
Therefore, we choose $H$ to be a convolutional operator, which can be parameterized 
by a deep linear convolutional network.
We will discuss the parameterization of $H$ in detail in Section \ref{sec:netstructures}; we first prove some parameterization independent properties.

The correct PDE solution is a fixed point of $\Phi_H$ by the following lemma:

\begin{lemma}
\label{lemma:fixed_point}
For any PDE problem $\PDEProb$ and choice of $H$, if $u^*$ is a fixed point of $\Psi$, it is a fixed point of $\Phi_H$ in \Eqref{eq:formulation}.
\end{lemma}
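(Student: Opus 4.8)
The plan is to show that if $u^*$ satisfies $\Psi(u^*) = u^*$, then substituting $u^*$ into the definition of $\Phi_H$ in \Eqref{eq:formulation} returns $u^*$ unchanged. First I would compute the intermediate quantity $w$ at the point $u^*$: by definition $w = \Psi(u^*; G, f, b, n) - u^*$. Since $u^*$ is assumed to be a fixed point of $\Psi$, we have $\Psi(u^*) = u^*$, and therefore $w = u^* - u^* = 0$. This is the crux of the argument: the correction term is built to act on the \emph{residual} $\Psi(u) - u$, which vanishes exactly at fixed points of $\Psi$.

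Next I would substitute this into the second line of \Eqref{eq:formulation}, obtaining
\[
\Phi_H(u^*) = \Psi(u^*) + G H w = u^* + G H 0.
\]
Here I would invoke the stated property that $H$ is a linear operator satisfying $H0 = 0$ (this is noted explicitly just after \Eqref{eq:formulation}). Consequently $G H 0 = G\, 0 = 0$, so the correction term drops out entirely and $\Phi_H(u^*) = u^*$. This establishes that $u^*$ is a fixed point of $\Phi_H$, as claimed.

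The argument is essentially a one-line substitution, so there is no serious obstacle; the only point requiring care is to make explicit \emph{why} the correction term vanishes. It is worth emphasizing that the conclusion holds for \textbf{any} choice of the learned operator $H$ (provided only that it is linear with $H0 = 0$), which is precisely what makes this design attractive: correctness of the fixed point is guaranteed by construction, independent of the parameters learned during training. Note that this lemma does not by itself assert that $u^*$ is the \emph{unique} fixed point of $\Phi_H$, nor that $\Phi_H$ converges to it — those are separate convergence questions tied to the spectral radius of the update matrix of $\Phi_H$, and would be addressed elsewhere. The lemma only secures that the PDE solution remains \emph{a} fixed point.
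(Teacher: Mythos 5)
Your proof is correct and follows exactly the paper's own argument: the residual $w = \Psi(u^*) - u^*$ vanishes at a fixed point of $\Psi$, and since $H$ is linear (so $H0 = 0$), the correction term $GHw$ drops out, giving $\Phi_H(u^*) = u^*$. Your added remarks on uniqueness and convergence being separate questions are accurate and consistent with how the paper treats them (via Theorem~\ref{thm:spectral_radius}).
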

\begin{proof}
Based on the iterative rule in \Eqref{eq:formulation},
if $u^*$ satisfies $ \Psi(u^*) = u^*$ then $w = \Psi(u^*) - u^* = \mathbf{0}$. Therefore, $\Phi_H(u^*) = \Psi(u^*) + G H \mathbf{0} = u^*$. 
\end{proof}
Moreover, the space of $\Phi_H$ subsumes the standard solver $\Psi$. If $\Hb = 0$, then $\Phi_H = \Psi$. Furthermore, denote $\Psi(u) = Tu+c$, then if $\Hb = T$, then since $GT = T$ (see \Eqref{eq:matrix_split_g}), 
\begin{equation}
    \Phi_H(u) = \Psi(u) + G T(\Psi(u) - u) = T\Psi(u) + c = \Psi^2(u)
\end{equation}
which is equal to two iterations of $\Psi$. Computing $\Psi$ requires one convolution $T$, while computing $\Phi_H$ requires two convolutions: $T$ and $H$. Therefore, if we choose $H=T$, then $\Phi_H$ computes two iterations of $\Psi$ with two convolutions: 
it is at least as efficient as the standard solver $\Psi$. 

\subsection{Training and Generalization}
\label{sec:training_and_generalization}

We train our iterator $\Phi_H(u; G, f, b, n)$ to converge quickly to the ground truth solution on a set $\cal{D} =$ $\{(G_{l}, f_{l}, b_{l}, n_l)\}_{l=1}^M$ of problem instances. For each instance, the ground truth solution $u^*$ is obtained from the existing solver $\Psi$. The learning objective is then
\begin{equation}
\label{eq:objective}
\min_H 
\sum_{(G_l, f_l, b_l, n_l) \in \cal{D}}
\mathbb{E}_{u^0 \sim \mathcal{N}(0, 1)} \lVert \Phi_H^k(u^0; G_l, f_l, b_l, n_l) - u^* \rVert_2^2
\end{equation}

Intuitively, we look for a matrix $H$ such that the corresponding iterator $\Phi_H$ will get us as close as possible to the solution in $k$ steps, starting from a random 
initialization $u^0$ sampled from a white Gaussian.
$k$ in our experiments is uniformly chosen from $[1, 20]$, similar to the procedure in \citep{song2017nice}. Smaller $k$ is easier to learn with less steps to back-propagate through, while larger $k$ 
better approximates our test-time setting: we care about the final approximation accuracy after a given number of iteration steps. 
Combining smaller and larger $k$ performs best in practice.   


We show in the following theorem that there is a convex open set of $\Hb$ that the learning algorithm can explore. To simplify the statement of the theorem, for any linear iterator $\Phi(u)=Tu+c$ we will refer to the spectral radius (norm) of $\Phi$ as the spectral radius (norm) of $T$. 

\def \ConvexOpen {
For fixed $G, f, b, n$, 
the spectral norm of $\Phi_H(u; G, f, b, n)$ is a convex function of $\Hb$, and the set of $\Hb$ such that the spectral norm of $\Phi_H(u; G, f, b, n) < 1$ is a convex open set.
}

\begin{theorem}
\label{thm:convex_open}
\ConvexOpen
\end{theorem}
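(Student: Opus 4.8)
The plan is to reduce the statement to two standard facts: that the spectral norm is a genuine matrix norm (hence convex), and that both convexity and continuity are preserved under composition with an affine map. The only real work is to exhibit the update matrix of $\Phi_H$ as an affine function of $H$; after that, everything is routine convex analysis.

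First I would substitute $\Psi(u) = Tu + c$ into \Eqref{eq:formulation}. Since $w = \Psi(u) - u = (T-I)u + c$, we obtain
\begin{equation}
\Phi_H(u) = Tu + c + GH\big((T-I)u + c\big) = \big(T + GH(T-I)\big)u + \big(c + GHc\big).
\end{equation}
Hence the update matrix of $\Phi_H$ is $T_H := T + GH(T-I)$, which is an affine function of $H$, and by the convention preceding the theorem the spectral norm of $\Phi_H$ equals $\lVert T_H \rVert$.

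For convexity I would exploit this affine structure: for any $H_1, H_2$ and $\lambda \in [0,1]$, $T_{\lambda H_1 + (1-\lambda) H_2} = \lambda T_{H_1} + (1-\lambda) T_{H_2}$. The triangle inequality and homogeneity of the spectral norm then give
\begin{equation}
\lVert T_{\lambda H_1 + (1-\lambda) H_2} \rVert \le \lambda \lVert T_{H_1} \rVert + (1-\lambda) \lVert T_{H_2} \rVert,
\end{equation}
which is precisely convexity of $H \mapsto \lVert T_H \rVert$. For the set $\mathcal{S} = \{ H : \lVert T_H \rVert < 1 \}$, convexity is immediate from the same inequality: if both endpoints have norm strictly below $1$, so does every convex combination. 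Openness follows because $H \mapsto \lVert T_H \rVert$ is continuous, being the composition of the continuous affine map $H \mapsto T_H$ with the continuous spectral norm; thus $\mathcal{S}$ is the preimage of the open half-line $(-\infty, 1)$ and is therefore open.

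I expect no genuine obstacle, as the argument collapses to standard convex analysis once the affine dependence of $T_H$ on $H$ is made explicit. The one subtlety worth emphasizing is that the theorem concerns the spectral \emph{norm}, which is a bona fide norm and hence convex; the identical argument would fail for the spectral radius $\rho(T_H)$, which is neither a norm nor convex. Since $\rho(T_H) \le \lVert T_H \rVert$, the convex open set $\mathcal{S}$ lies entirely inside the region $\{ H : \rho(T_H) < 1 \}$ of convergent iterators (Theorem \ref{thm:spectral_radius}), so $\mathcal{S}$ is a well-behaved subregion of the valid iterators from Definition \ref{def:valid} that the learning algorithm can safely explore.
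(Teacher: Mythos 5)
Your proposal is correct and follows essentially the same route as the paper's own proof: both write the update matrix of $\Phi_H$ as $\Iterator$ (your $T + GH(T-I)$), note that this depends affinely on $H$, obtain convexity of $H \mapsto \lVert \Iterator \rVert_2$ by composing the convex spectral norm with this affine map, and get openness from continuity by taking the preimage of an open interval. The only differences are cosmetic: you spell out the triangle-inequality step explicitly and use $(-\infty,1)$ where the paper uses $(-\epsilon,1)$.
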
 
\begin{proof}
See Appendix \ref{appendix:proofs}.
\end{proof}

Therefore, to find an iterator with small spectral norm, the learning algorithm only has to explore a convex open set. 
Note that Theorem \ref{thm:convex_open} holds for \emph{spectral norm}, whereas validity requires small \emph{spectral radius} in Theorem \ref{thm:spectral_radius}. Nonetheless, several important PDE problems (Poisson, Helmholtz, etc) are symmetric, so it is natural to use a symmetric iterator, which means that spectral norm is equal to spectral radius. In our experiments, we do not explicitly enforce symmetry, but we observe that the optimization finds symmetric iterators automatically.

For training, we use a single grid size $n$, a single geometry $G$, $f = 0$, and a restricted set of boundary conditions $b$. The geometry we use is a square domain shown in Figure \ref{fig:square_setting}. Although we train on a single domain, the model has surprising generalization properties, which we show in the following:




\def \PropGeneralization {
For fixed $A, G, n$ and fixed $H$, if for some $f_0, b_0$, $\Phi_H(u; G, f_0, b_0, n)$ is valid for the PDE problem $(A, G, f_0, b_0, n)$, then for all $f$ and $b$, the iterator $\Phi_H(u; G, f, b, n)$ is valid for the PDE problem $\PDEProb$.
}

\begin{proposition}
\label{thm:generalization}
\PropGeneralization
\end{proposition}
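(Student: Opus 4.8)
The plan is to exploit the fact that the \emph{convergence} behaviour of the linear iterator $\Phi_H$ is governed solely by its update matrix, which turns out to be independent of $f$ and $b$. First I would write $\Phi_H$ in the canonical linear form $\Phi_H(u) = T_H u + c_H$. Using $\Psi(u) = Tu + c$ from \Eqref{eq:matrix_split_g} and substituting $w = \Psi(u) - u = (T-I)u + c$ into \Eqref{eq:formulation}, a direct computation gives $T_H = T + GH(T-I)$ and $c_H = (I + GH)c$. The essential observation is that $T = GM^{-1}N$ depends only on $A$ (through the fixed splitting $A = M - N$), on $G$, and on $n$; hence, for fixed $A, G, n, H$, the matrix $T_H$ does \emph{not} depend on $f$ or $b$, and only the constant vector $c_H$ carries that dependence.

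With this in hand, condition (a) of Definition \ref{def:valid} transfers immediately. By assumption $\Phi_H(u; G, f_0, b_0, n)$ is valid, so it converges from every initialization; by Theorem \ref{thm:spectral_radius} this is equivalent to $\rho(T_H) < 1$. Since $T_H$ is the same matrix for every choice of $(f, b)$, the bound $\rho(T_H) < 1$ holds for all $(f, b)$, and Theorem \ref{thm:spectral_radius} then guarantees that $\Phi_H(u; G, f, b, n)$ converges to a unique fixed point from any initialization, establishing condition (a) for all $f$ and $b$.

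It remains to verify condition (b), that this unique fixed point is the true PDE solution. Here I would invoke the standing assumption that the base iterator $\Psi(u; G, f, b, n)$ is valid for every $(f, b)$: its unique fixed point $u^*$ is precisely the solution of $\PDEProb$. By Lemma \ref{lemma:fixed_point}, this same $u^*$ is a fixed point of $\Phi_H(u; G, f, b, n)$. Since part (a) already shows $\Phi_H$ has a \emph{unique} fixed point, that fixed point must coincide with $u^*$, which is the PDE solution. Both requirements of Definition \ref{def:valid} then hold, proving validity of $\Phi_H(u; G, f, b, n)$ for all $f$ and $b$.

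The conceptual crux, and the only step requiring care, is the first one: recognizing and verifying that the update matrix $T_H$ is independent of $f$ and $b$. Once this decoupling of the iteration matrix from the problem data is established, convergence transfers for free across all right-hand sides and boundary values, and the fixed-point condition follows from the already-proved Lemma \ref{lemma:fixed_point} together with validity of the base solver. I do not anticipate genuine difficulty beyond this bookkeeping, since no new spectral estimate is needed: the entire argument reuses the single bound $\rho(T_H) < 1$ obtained at the instance $(f_0, b_0)$.
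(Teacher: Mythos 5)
Your proof is correct and follows essentially the same route as the paper's: you compute the update matrix $T_H = T + GH(T-I)$ (the paper's $\Iterator$), observe it is independent of $f$ and $b$, transfer the spectral-radius bound via Theorem~\ref{thm:spectral_radius}, and settle the fixed-point condition with Lemma~\ref{lemma:fixed_point} and the validity of $\Psi$. Your write-up is simply a more explicit version of the paper's compressed argument, with no substantive difference.
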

\begin{proof}
See Appendix \ref{appendix:proofs}.
\end{proof}


The proposition states that we freely generalize to different $f$ and $b$. There is no guarantee that we can generalize to different $G$ and $n$. Generalization to different $G$ and $n$ has to be empirically verified: in our experiments, our learned iterator converges to the correct solution for a variety of grid sizes $n$ and geometries $G$, even though it was only trained on one grid size and geometry.




Even when generalization fails, there is no risk of obtaining incorrect results. The iterator will simply fail to converge. This is because according to Lemma~\ref{lemma:fixed_point}, fixed points of our new iterator is the same as the fixed point of hand designed iterator $\Psi$. Therefore if our iterator is convergent, it is valid. 

\subsection{Interpretation of \textit{H}}
\label{sec:h_interpretation}  

What is $H$ trying to approximate? In this section we show that we are training our linear function $GH$ to approximate $T (I-T)^{-1}$: if it were able to approximate $T (I-T)^{-1}$ perfectly, our iterator $\Phi_H$ will converge to the correct solution in a single iteration. 

Let the original update rule be $\Psi(u)=Tu+c$, and the unknown ground truth solution be $u^*$ satisfying $u^* = Tu^* + c$. Let $r = u^*-u$ be the current error, and $e = u^* - \Psi(u)$ be the new error after applying one step of $\Psi$. They are related by 
\begin{equation}
\label{eq:error}
\begin{aligned}
    e = u^* - \Psi(u) &= u^* - (Tu + c) = T(u^* - u) = Tr \\
\end{aligned}
\end{equation}
In addition, let $w = \Psi(u)-u$ be the update $\Psi$ makes. This is related to the current error $r$ by 
\begin{equation}
\label{eq:residual}
\begin{aligned}
    w = \Psi(u) - u = Tu + c - u + (u^* - Tu^* - c) = T(u-u^*) + (u^*-u) = (I-T)r
\end{aligned}
\end{equation}

From \Eqref{eq:formulation} we can observe that the linear operator $GH$ takes as input $\Psi$'s update $w$, and tries to approximate the error $e$: $GHw \approx e$. If the approximation were perfect: $GHw=e$, the iterator $\Phi_H$ would converge in a single iteration. Therefore, we are trying to find some linear operator $R$, such that $Rw=e$. In fact, if we combine 
 \Eqref{eq:error} and \Eqref{eq:residual}, we can observe that $T(I-T)^{-1}$ is (uniquely) the linear operator we are looking for
\begin{equation}
\label{eq:h}
    T(I-T)^{-1}w = e
\end{equation}
where $(I-T)^{-1}$ exists because $\rho(T) < 1$, so all eigenvalues of $I-T$ must be strictly positive. Therefore, we would like our linear function $GH$ to approximate $T (I-T)^{-1}$. 

Note that $(I-T)^{-1}$ is a dense matrix in general, meaning that it is impossible to exactly achieve $GH=T(I-T)^{-1}$ with a convolutional operator $H$. 
However, the better $GH$ is able to approximate $T(I-T)^{-1}$, the faster our iterator converges to the solution $u^*$.


\subsection{Linear Deep Networks}
\label{sec:netstructures}
In our iterator design, $H$ is a linear function parameterized by a linear deep network without non-linearity or bias terms. Even though our objective in \Eqref{eq:objective} is a non-linear function of the parameters of the deep network, this is not an issue in practice. In particular, \citet{arora2018optimization} observes that when modeling linear functions, deep networks can be faster to optimize with gradient descent compared to linear ones, despite non-convexity. 

Even though a linear deep network can only represent a linear function, it has several advantages. On an $n\times n$ grid, each convolution layer only requires $O(n^2)$ computation and have a constant number of parameters, while a general linear function requires $O(n^4)$ computation and have $O(n^4)$ parameters. Stacking $d$ convolution layers allows us to parameterize complex linear functions with large receptive fields, while only requiring $O(dn^2)$ computation and $O(d)$ parameters. We experiment on two types of linear deep networks:

\textbf{Conv model.} We model $H$ as a network with $3 \times 3$ convolutional layers without non-linearity or bias. We will refer to a model with $k$ layers as ``Conv$k$'', e.g. Conv3 has 3 convolutional layers.

\textbf{U-Net model.} The Conv models suffer from the same problem as Jacobi: the receptive field grows only by 1 for each additional layer. To resolve this problem, we design the deep network counter-part of the Multigrid method. Instead of manually designing the sub-sampling / super-sampling functions, we use a U-Net architecture~\citep{ronneberger2015u} to learn them from data.
Because each layer reduces the grid size by half, and the $i$-th layer of the U-Net only operates on $(2^{-i}n)$-sized grids, the total computation is only increased by a factor of 
\[ 1 + 1/4 + 1/16 + \cdots < 4/3 \]
compared to a two-layer convolution. The minimal overhead provides a very large improvement of convergence speed in our experiments. We will refer to Multigrid and U-Net models with $k$ sub-sampling layers as Multigrid$k$ and U-Net$k$, e.g. U-Net2 is a model with 2 sub-sampling layers.

\section{Experiments}
\label{sec:experiments}

\subsection{Setting}

We evaluate our method on the 2D Poisson equation with Dirichlet boundary conditions, $\nabla^2 \uc = \fc$. There exist several iterative solvers for the Poisson equation, including Jacobi, Gauss-Seidel, conjugate-gradient, and multigrid methods. We select the Jacobi method as our standard solver $\Psi$. 

To reemphasize, our goal is to train a model on simple domains where the ground truth solutions can be easily obtained, and then evaluate its performance on different geometries and boundary conditions. Therefore, for training, we select the simplest Laplace equation, $\nabla^2 \uc = 0$, on a square domain with boundary conditions such that each side is a random fixed value. \Figref{fig:square_setting} shows an example of our training domain and its ground truth solution. This setting is also used in \citet{farimani2017deep} and \citet{sharma2018weakly}.
 
For testing, we use larger grid sizes than training. For example, we test on $256 \times 256$ grid for a model trained on $64 \times 64$ grids. Moreover, we designed challenging geometries to test the generalization of our models. 
We test generalization on 4 different settings: (i) same geometry but larger grid, (ii) L-shape geometry, (iii) Cylinders geometry, and (iv) Poisson equation in same geometry, but $f \neq 0$. The two geometries are designed because the models were trained on square domains and have never seen sharp or curved boundaries. 
Examples of the 4 settings are shown in \Figref{fig:test_settings}.

\begin{figure}[t]
    \centering
    \begin{subfigure}[b]{0.2\textwidth}
        \includegraphics[width=\textwidth]{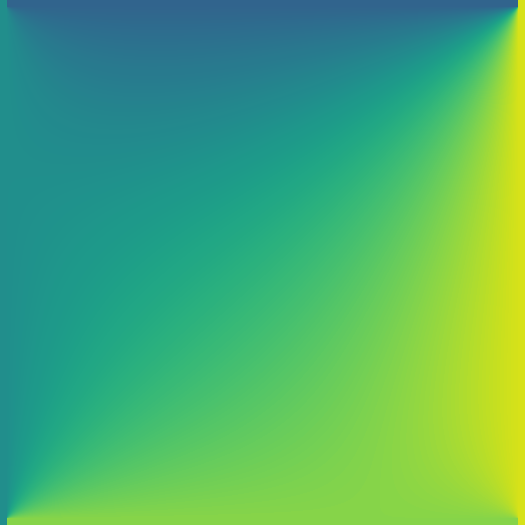}
        \caption{Square domain.}
        \label{fig:square_setting}
    \end{subfigure}
    \hfill
    \begin{subfigure}[b]{0.2\textwidth}
        \includegraphics[width=\textwidth]{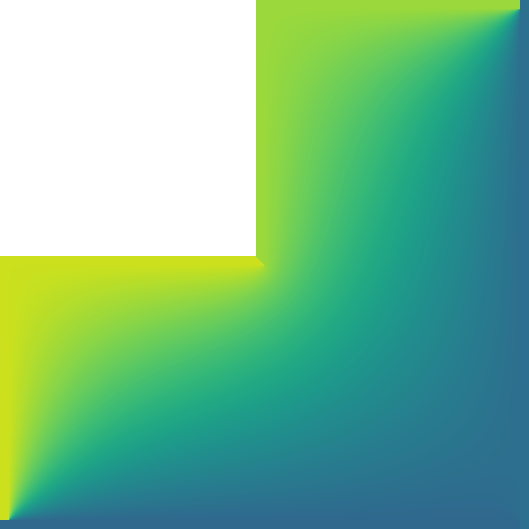}
        \caption{L-shape domain.}
    \end{subfigure}
    \hfill
    \begin{subfigure}[b]{0.2\textwidth}
        \includegraphics[width=\textwidth]{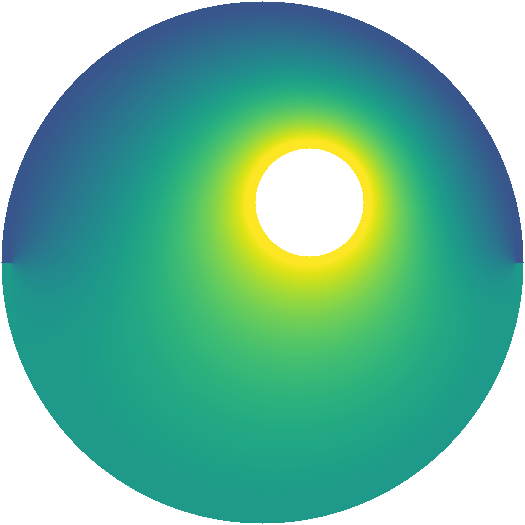}
        \caption{Cylinders domain.}
    \end{subfigure}
    \hfill
    \begin{subfigure}[b]{0.2\textwidth}
        \includegraphics[width=\textwidth]{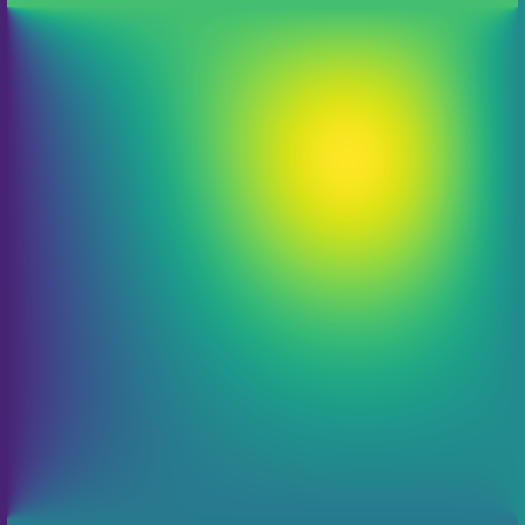}
        \caption{Poisson equation in the square domain.}
        \label{fig:poisson_gt}
    \end{subfigure}
\caption{The ground truth solutions of examples in different settings. We only train our models on the square domain, and we test on all 4 settings.}
\label{fig:test_settings}
\end{figure}

\subsection{Evaluation}

As discussed in \Secref{sec:iterative_solvers}, the convergence rate of any linear iterator can be determined from the spectral radius $\rho(T)$, which provides guarantees on convergence and convergence rate. However, a fair comparison should also consider the computation cost of $H$. Thus, we evaluate the convergence rate by calculating the computation cost required for the error to drop below a certain threshold.

On GPU, the Jacobi iterator and our model can both be efficiently implemented as convolutional layers. Thus, we measure the computation cost by the number of convolutional layers. On CPU, each Jacobi iteration $u_{i,j}' = \frac{1}{4} (u_{i-1,j} + u_{i+1,j} + u_{i,j-1} + u_{i,j+1})$ 
has 4 multiply-add operations, while a $3 \times 3$ convolutional kernel requires 9 operations, so we measure the computation cost by the number of multiply-add operations. This metric is biased in favor of Jacobi because there is little practical reason to implement convolutions on CPU. 
Nonetheless, we report both metrics in our experiments.

\subsection{Conv Model}

Table~\ref{table:comparisons} shows results of the Conv model. The model is trained on a $16 \times 16$ square domain, and tested on $64 \times 64$. For all settings, our models converge to the correct solution, and require less computation than Jacobi. The best model, Conv3, is $\sim 5 \times$ faster than Jacobi in terms of layers, and $\sim 2.5 \times$  faster in terms of multiply-add operations.

As discussed in \Secref{sec:training_and_generalization}, if our iterator converges for a geometry, then it is guaranteed to converge to the correct solution for any $f$ and boundary values $b$. The experiment results show that our model not only converges but also converges faster than the standard solver, even though it is only trained on a smaller square domain.


\begin{table}[t]
\caption{Comparisons between our models and the baseline solvers. The Conv models are compared with Jacobi, and the U-Net models are compared with Multigrid. The numbers are the ratio between the computation costs of our models and the baselines. None of the values are greater than 1, which means that all of our models achieve a speed up on every problem and both performance metric (convolutional layers and multiply-add operations). 
}
\label{table:comparisons}
\begin{center}
\begin{tabular}{c c | c c c c}
\bf Model & \bf Baseline & \bf Square & \bf L-shape & \bf Cylinders & \bf Square-Poisson
\\
 & & layers / ops & layers / ops & layers / ops & layers / ops \\  [2pt]
\hline
Conv1  & Jacobi & 0.432 / 0.702 & 0.432 / 0.702 & 0.432 / 0.702 & 0.431 / 0.701\\
Conv2  & Jacobi & 0.286 / 0.524 & 0.286 / 0.524 & 0.286 / 0.524 & 0.285 / 0.522 \\
Conv3  & Jacobi & \textbf{0.219 / 0.424} & \textbf{0.219 / 0.423} & \textbf{0.220 / 0.426} & \textbf{0.217 / 0.421} \\
Conv4  & Jacobi & 0.224 / 0.449 & 0.224 / 0.449 & 0.224 / 0.448 & 0.222 / 0.444 \\ [2pt]
\hline
U-Net2  & Multigrid2 & \textbf{0.091 / 0.205} & \textbf{0.090 / 0.203} & \textbf{0.091	/ 0.204} & \textbf{0.079 / 0.178} \\
U-Net3  & Multigrid3 & 0.220 / 0.494 & 0.213 / 0.479 & 0.201 / 0.453 & 0.185 /	0.417 \\
\end{tabular}
\end{center}
\end{table}



\subsection{U-Net Model}

For the U-Net models, we compare them against Multigrid models with the same number of subsampling and smoothing layers. Therefore, our models have the same number of convolutional layers, and roughly $9/4$ times the number of operations compared to Multigrid. The model is trained on a $64 \times 64$ square domain, and tested on $256 \times 256$. 

The bottom part of Table~\ref{table:comparisons} shows the results of the U-Net model. Similar to the results of Conv models, our models outperforms Multigrid in all settings. Note that U-Net2 has lower computation cost compared with Multigrid2 than U-Net3 compared to Multigrid 3. This is because Multigrid2 is a relatively worse baseline. U-Net3 still converges faster than U-Net2.

\begin{figure}[t]
    \centering
    \begin{subfigure}[b]{0.32\textwidth}
        \includegraphics[width=\textwidth]{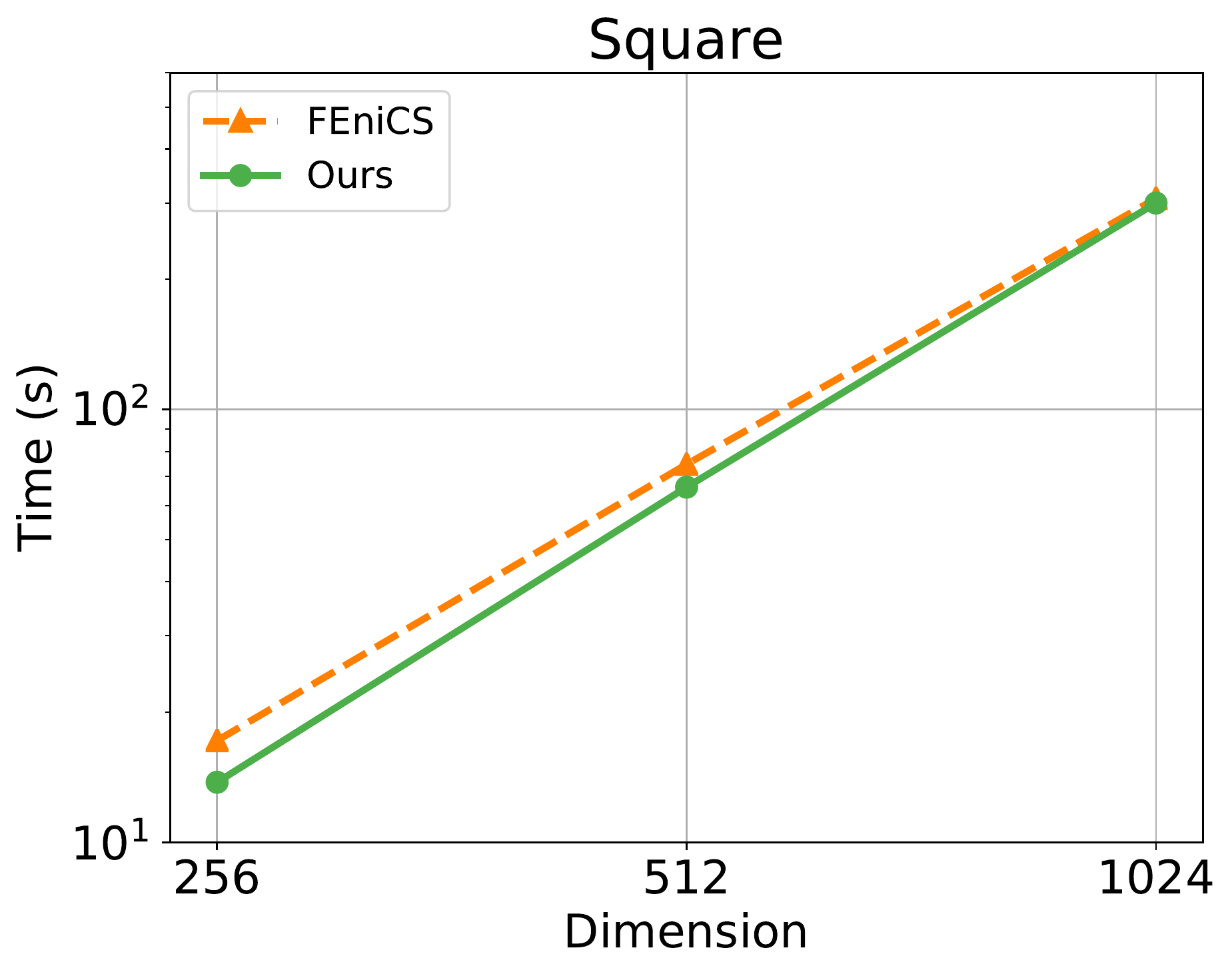}
        \caption{Square domain.}
    \end{subfigure}
    ~
    \begin{subfigure}[b]{0.32\textwidth}
        \includegraphics[width=\textwidth]{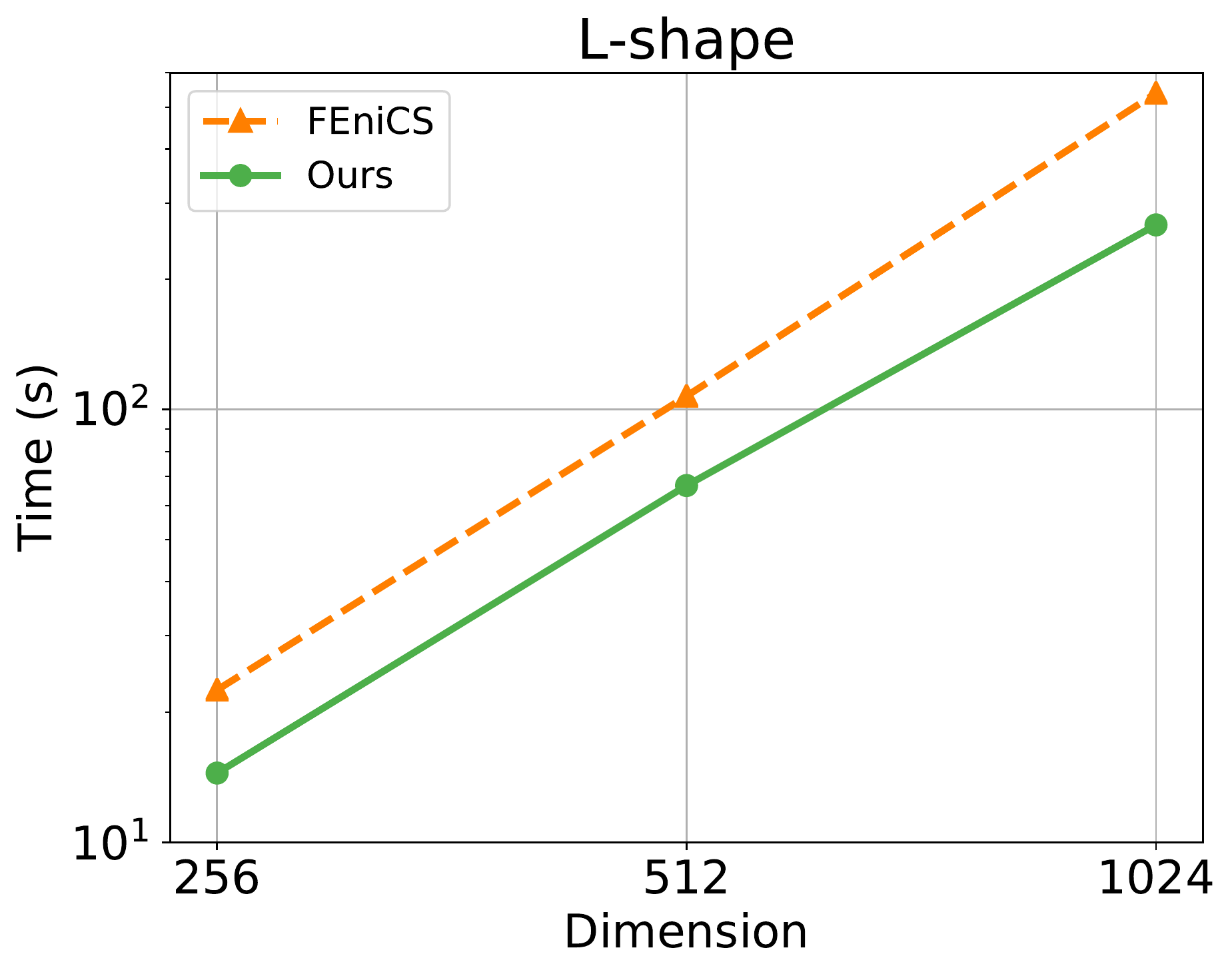}
        \caption{L-shape domain.}
    \end{subfigure}
    ~
    \begin{subfigure}[b]{0.32\textwidth}
        \includegraphics[width=\textwidth]{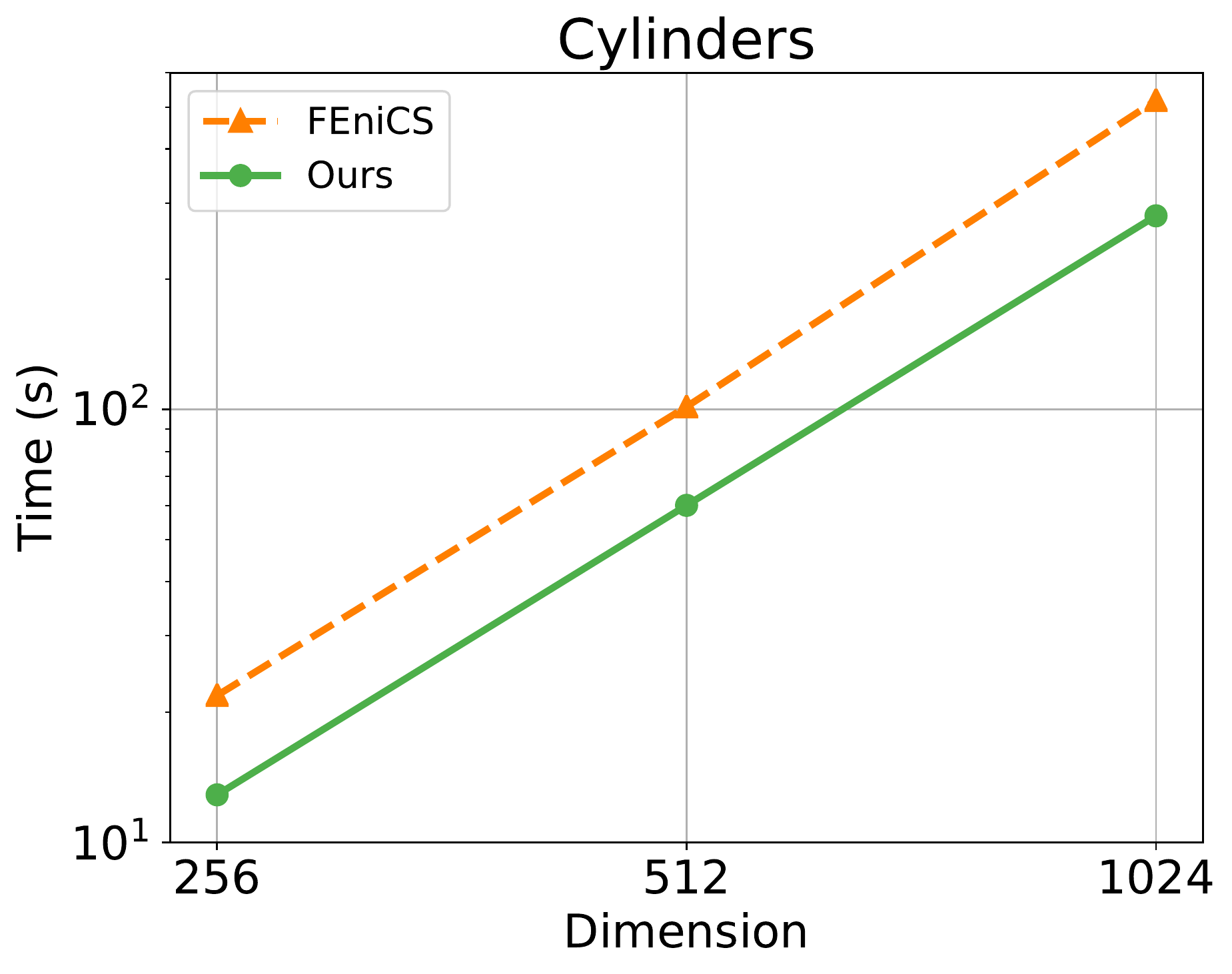}
        \caption{Cylinders domain.}
    \end{subfigure}
    \caption{CPU runtime comparisons of our model with the FEniCS model. Our method is comparable or faster than the best solver in FEniCS in all cases. When run on GPU, our solver provides an additional $30\times$ speedup.}
\label{fig:fenics}
\end{figure}

\subsection{Comparison with FEniCS}
The FEniCS package~\citep{logg2012automated} provides a collection of tools with high-level Python and C++ interfaces to solve differential equations. The open-source project is developed and maintained by a global community of scientists and software developers. Its extensive optimization over the years, including the support for parallel computation, has led to its widespread adaption in industry and  academia~\citep{alnaes2015fenics}.

We measure the wall clock time of the FEniCS model and our model, run on the same hardware. The FEniCS model is set to be the minimal residual method with algebraic multigrid preconditioner, which we measure to be the fastest compared to other methods such as Jacobi or Incomplete LU factorization preconditioner. We ignore the time it takes to set up geometry and boundary conditions, and only consider the time the solver takes to solve the problem. We set the error threshold to be 1 percent of the initial error. For the square domain, we use a quadrilateral mesh. For the L-shape and cylinder domains, however, we let FEniCS generate the mesh automatically, while ensuring the number of mesh points to be similar.

\Figref{fig:fenics} shows that our model is comparable or faster than FEniCS in wall clock time. 
These experiments are all done on CPU. Our model efficiently runs on GPU, while the fast but complex methods in FEniCS do not have efficient GPU implementations available. On GPU, we measure an additional $30 \times$ speedup (on Tesla K80 GPU, compared with a 64-core CPU).

\section{Related Work}

Recently, there have been several works on applying deep learning to solve the Poisson equation. However, to the best of our knowledge, previous works used deep networks to directly generate the solution; they have no correctness guarantees and are not generalizable to arbitrary grid sizes and boundary conditions.
Most related to our work are \citep{farimani2017deep} and \citep{sharma2018weakly}, which learn deep networks to output the solution of the 2D Laplace equation (a special case where $f=0$). \citep{farimani2017deep} trained a U-Net model that takes in the boundary condition as a 2D image and outputs the solution. The model is trained by L1 loss to the ground truth solution and an adversarial discriminator loss. \citep{sharma2018weakly} also trained a U-net model but used a weakly-supervised loss. 
There are other related works that solved the Poisson equation in concrete physical problems.
\citep{tang2017study} solved for electric potential in 2D/3D space; \citep{tompson2016accelerating} solved for pressure fields for fluid simulation; \citep{zhang2018solving} solved particle simulation of a PN Junction.  

There are other works that solve other types of PDEs. For example, many studies aimed to use deep learning to accelerate and approximate fluid dynamics, governed by the Euler equation or the Navier-Stokes equations~\citep{guo2016convolutional, yang2016data, chu2017data, kutz2017deep}. \citep{eisman2018bayesian} use Bayesian optimization to design shapes with reduced drag coefficients in laminar fluid flow. Other applications include solving the Schrodinger equation~\citep{mills2017deep}, turbulence modeling~\citep{singh2017machine}, and the American options and Black Scholes PDE~\citep{sirignano2018dgm}. A lot of these PDEs are nonlinear and may not have a standard linear iterative solver, which is a limitation to our current method since our model must be built on top of an existing linear solver to ensure correctness. We consider the extension to different PDEs as future work.

\section{Conclusion}

We presented a method to learn an iterative solver for PDEs that improves on an existing standard solver. The correct solution is theoretically guaranteed to be the fixed point of our iterator. We show that our model, trained on simple domains, can generalize to different grid sizes, geometries and boundary conditions. It converges correctly and achieves significant speedups compared to standard solvers, including highly optimized ones implemented in FEniCS.

\section{Acknowledgements}
This research was supported by NSF (\#1651565, \#1522054, \#1733686), ONR (N00014-19-1-2145), AFOSR (FA9550-19-1-0024), Siemens, and JP Morgan.

\newpage

\bibliography{iclr2019_conference}

\begin{thebibliography}{25}
\providecommand{\natexlab}[1]{#1}
\providecommand{\url}[1]{\texttt{#1}}
\expandafter\ifx\csname urlstyle\endcsname\relax
  \providecommand{\doi}[1]{doi: #1}\else
  \providecommand{\doi}{doi: \begingroup \urlstyle{rm}\Url}\fi

\bibitem[Aln{\ae}s et~al.(2015)Aln{\ae}s, Blechta, Hake, Johansson, Kehlet,
  Logg, Richardson, Ring, Rognes, and Wells]{alnaes2015fenics}
Martin~S Aln{\ae}s, Jan Blechta, Johan Hake, August Johansson, Benjamin Kehlet,
  Anders Logg, Chris Richardson, Johannes Ring, Marie~E Rognes, and Garth~N
  Wells.
\newblock The fenics project version 1.5.
\newblock \emph{Archive of Numerical Software}, 3\penalty0 (100):\penalty0
  9--23, 2015.

\bibitem[Andrychowicz et~al.(2016)Andrychowicz, Denil, Gomez, Hoffman, Pfau,
  Schaul, Shillingford, and De~Freitas]{andrychowicz2016learning}
Marcin Andrychowicz, Misha Denil, Sergio Gomez, Matthew~W Hoffman, David Pfau,
  Tom Schaul, Brendan Shillingford, and Nando De~Freitas.
\newblock Learning to learn by gradient descent by gradient descent.
\newblock In \emph{Advances in Neural Information Processing Systems}, 2016.

\bibitem[Arora et~al.(2018)Arora, Cohen, and Hazan]{arora2018optimization}
Sanjeev Arora, Nadav Cohen, and Elad Hazan.
\newblock On the optimization of deep networks: Implicit acceleration by
  overparameterization.
\newblock \emph{arXiv preprint arXiv:1802.06509}, 2018.

\bibitem[Briggs et~al.(2000)Briggs, McCormick, et~al.]{briggs2000multigrid}
William~L Briggs, Steve~F McCormick, et~al.
\newblock \emph{A multigrid tutorial}, volume~72.
\newblock Siam, 2000.

\bibitem[Chu \& Thuerey(2017)Chu and Thuerey]{chu2017data}
Mengyu Chu and Nils Thuerey.
\newblock Data-driven synthesis of smoke flows with cnn-based feature
  descriptors.
\newblock \emph{ACM Transactions on Graphics (TOG)}, 36\penalty0 (4):\penalty0
  69, 2017.

\bibitem[Eismann et~al.(2018)Eismann, Levy, Shu, Bartzsch, and
  Ermon]{eisman2018bayesian}
Stephan Eismann, Daniel Levy, Rui Shu, Stefan Bartzsch, and Stefano Ermon.
\newblock Bayesian optimization and attribute adjustment.
\newblock In \emph{Proc. 34th Conference on Uncertainty in Artificial
  Intelligence}, 2018.

\bibitem[Farimani et~al.(2017)Farimani, Gomes, and Pande]{farimani2017deep}
Amir~Barati Farimani, Joseph Gomes, and Vijay~S Pande.
\newblock Deep learning the physics of transport phenomena.
\newblock \emph{arXiv preprint arXiv:1709.02432}, 2017.

\bibitem[Frankel(1950)]{frankel1950convergence}
Stanley~P Frankel.
\newblock Convergence rates of iterative treatments of partial differential
  equations.
\newblock \emph{Mathematical Tables and Other Aids to Computation}, 4\penalty0
  (30):\penalty0 65--75, 1950.

\bibitem[Guo et~al.(2016)Guo, Li, and Iorio]{guo2016convolutional}
Xiaoxiao Guo, Wei Li, and Francesco Iorio.
\newblock Convolutional neural networks for steady flow approximation.
\newblock In \emph{Proceedings of the 22nd ACM SIGKDD International Conference
  on Knowledge Discovery and Data Mining}, pp.\  481--490, 2016.

\bibitem[Kraska et~al.(2018)Kraska, Beutel, Chi, Dean, and
  Polyzotis]{kraska2018case}
Tim Kraska, Alex Beutel, Ed~H Chi, Jeffrey Dean, and Neoklis Polyzotis.
\newblock The case for learned index structures.
\newblock In \emph{Proceedings of the 2018 International Conference on
  Management of Data}, pp.\  489--504. ACM, 2018.

\bibitem[Kutz(2017)]{kutz2017deep}
J~Nathan Kutz.
\newblock Deep learning in fluid dynamics.
\newblock \emph{Journal of Fluid Mechanics}, 814:\penalty0 1--4, 2017.

\bibitem[LeVeque(2007)]{leveque2007finite}
Randall~J LeVeque.
\newblock \emph{Finite difference methods for ordinary and partial differential
  equations: steady-state and time-dependent problems}, volume~98.
\newblock Siam, 2007.

\bibitem[Levy et~al.(2017)Levy, Hoffman, and
  Sohl-Dickstein]{levy2017generalizing}
Daniel Levy, Matthew~D Hoffman, and Jascha Sohl-Dickstein.
\newblock Generalizing hamiltonian monte carlo with neural networks.
\newblock \emph{arXiv preprint arXiv:1711.09268}, 2017.

\bibitem[Logg et~al.(2012)Logg, Mardal, and Wells]{logg2012automated}
Anders Logg, Kent-Andre Mardal, and Garth Wells.
\newblock \emph{Automated solution of differential equations by the finite
  element method: The FEniCS book}.
\newblock Springer, 2012.
\newblock ISBN 978-3-642-23098-1.
\newblock \doi{10.1007/978-3-642-23099-8}.

\bibitem[Mills et~al.(2017)Mills, Spanner, and Tamblyn]{mills2017deep}
Kyle Mills, Michael Spanner, and Isaac Tamblyn.
\newblock Deep learning and the schr{\"o}dinger equation.
\newblock \emph{Physical Review A}, 96\penalty0 (4):\penalty0 042113, 2017.

\bibitem[Olver(2008)]{olver2008numerical}
Peter~J Olver.
\newblock Numerical solution of ordinary differential equations.
\newblock \emph{Numerical Analysis Lecture Notes}, 2008.

\bibitem[Ronneberger et~al.(2015)Ronneberger, Fischer, and
  Brox]{ronneberger2015u}
Olaf Ronneberger, Philipp Fischer, and Thomas Brox.
\newblock U-net: Convolutional networks for biomedical image segmentation.
\newblock In \emph{International Conference on Medical image computing and
  computer-assisted intervention}, pp.\  234--241. Springer, 2015.

\bibitem[Sharma et~al.(2018)Sharma, Farimani, Gomes, Eastman, and
  Pande]{sharma2018weakly}
Rishi Sharma, Amir~Barati Farimani, Joe Gomes, Peter Eastman, and Vijay Pande.
\newblock Weakly-supervised deep learning of heat transport via physics
  informed loss.
\newblock \emph{arXiv preprint arXiv:1807.11374}, 2018.

\bibitem[Singh et~al.(2017)Singh, Medida, and Duraisamy]{singh2017machine}
Anand~Pratap Singh, Shivaji Medida, and Karthik Duraisamy.
\newblock Machine-learning-augmented predictive modeling of turbulent separated
  flows over airfoils.
\newblock \emph{AIAA Journal}, pp.\  2215--2227, 2017.

\bibitem[Sirignano \& Spiliopoulos(2018)Sirignano and
  Spiliopoulos]{sirignano2018dgm}
Justin Sirignano and Konstantinos Spiliopoulos.
\newblock Dgm: A deep learning algorithm for solving partial differential
  equations.
\newblock \emph{Journal of Computational Physics}, 375:\penalty0 1339--1364,
  2018.

\bibitem[Song et~al.(2017)Song, Zhao, and Ermon]{song2017nice}
Jiaming Song, Shengjia Zhao, and Stefano Ermon.
\newblock A-nice-mc: Adversarial training for mcmc.
\newblock In \emph{Advances in Neural Information Processing Systems}, 2017.

\bibitem[Tang et~al.(2017)Tang, Shan, Dang, Li, Yang, Xu, and
  Wu]{tang2017study}
Wei Tang, Tao Shan, Xunwang Dang, Maokun Li, Fan Yang, Shenheng Xu, and Ji~Wu.
\newblock Study on a poisson's equation solver based on deep learning
  technique.
\newblock In \emph{IEEE Electrical Design of Advanced Packaging and Systems
  Symposium (EDAPS)}. IEEE, 2017.

\bibitem[Tompson et~al.(2017)Tompson, Schlachter, Sprechmann, and
  Perlin]{tompson2016accelerating}
Jonathan Tompson, Kristofer Schlachter, Pablo Sprechmann, and Ken Perlin.
\newblock Accelerating eulerian fluid simulation with convolutional networks.
\newblock In \emph{International Conference on Machine Learning}, 2017.

\bibitem[Yang et~al.(2016)Yang, Yang, and Xiao]{yang2016data}
Cheng Yang, Xubo Yang, and Xiangyun Xiao.
\newblock Data-driven projection method in fluid simulation.
\newblock \emph{Computer Animation and Virtual Worlds}, 27\penalty0
  (3-4):\penalty0 415--424, 2016.

\bibitem[Zhang et~al.(2018)Zhang, Zhang, Sun, Erickson, From, and
  Fan]{zhang2018solving}
Zhongyang Zhang, Ling Zhang, Ze~Sun, Nicholas Erickson, Ryan From, and Jun Fan.
\newblock Solving poisson's equation using deep learning in particle simulation
  of pn junction.
\newblock \emph{arXiv preprint arXiv:1810.10192}, 2018.

\end{thebibliography}
\bibliographystyle{iclr2019_conference}

\newpage
\appendix

\section{Proofs}
\label{appendix:proofs}



\textbf{Theorem \ref{thm:spectral_radius}.}
\textit{
\ThmSpectralRadius
}

\begin{proof}

Suppose $\rho(T) < 1$, then $(I-T)^{-1}$ must exist because all eigenvalues of $I-T$ must be strictly positive. Let $u^* = (I-T)^{-1}c$; this $u^*$ is a stationary point of the iterator $\Psi$, i.e. $u^* = T u^* + c$.
For any initialization $u^0$, let $u^k = \Psi^k (u^0)$. The error $e^k = u^* - u^k$ satisfies

\begin{equation}
    T e^k = (Tu^* + c) - (Tu^k + c) = u^* - u^{k+1} = e^{k+1} \Rightarrow e^k = T^k e^0
\end{equation}

Since $\rho(T) < 1$, we know $T^k \to 0$ as $k \to \infty$~\citep{leveque2007finite}, which means the error $e^k \to 0$. Therefore, $\Psi$ converges to $u^*$ from any $u^0$. 

Now suppose $\rho(T) \geq 1$. Let $\lambda_1$ be the largest absolute eigenvalue where $\rho(T) = |\lambda_1| \geq 1$, and $v_1$ be its corresponding eigenvector. We select initialization $u^0 = u^* + v_1$, then $e^0 = v_1$. Because $\lvert \lambda_1 \rvert \geq 1$, we have $\lvert \lambda_1^k \rvert \geq 1$, then 
\[ T^k e^0 = \lambda_1^k v_1 {\not\to}_{k \to \infty} 0 \]
However we know that under a different initialization $\hat{u}^0 = u^*$, we have $\hat{e}^0 = 0$, so $T^k \hat{e}^0 = 0$. Therefore the iteration cannot converge to the same fixed point from different initializations  $u^0$ and $\hat{u}^0$. 


\end{proof}

\textbf{Proposition~\ref{prop:fixed_point}}
\textit{If $M$ is a full rank diagonal matrix, and $u^* \in \mathbb{R}^{n^2 \times n^2}$ satisfies \Eqref{eq:matrix_split_g}, then $u^*$ satisfies \Eqref{eq:GAu}.}

\begin{proof}[Proof of Proposition~\ref{prop:fixed_point}]
Let $u^*$ be a fixed point of \Eqref{eq:matrix_split_g} then
\[ Gu^* + (I-G)u^* = G(M^{-1}N u^* + M^{-1}f) + (I - G)b  \]
This is equivalent to
\begin{equation}
    \begin{aligned}
        (I-G)u^* &= (I-G)b \\
        G(u^* - M^{-1}N u^* - M^{-1}f) &= 0
    \end{aligned}
\end{equation}
The latter equation is equivalent to $GM^{-1}(Au^*-f)=0$. If $M$ is a full rank diagonal matrix, this implies $G(Au^*-f)=0$, which is $GAu^* = Gf$. Therefore, $u^*$ satisfies Eq.(\ref{eq:GAu}).  
\end{proof}


\textbf{Theorem~\ref{thm:convex_open}.}
\textit{
\ConvexOpen
}

\begin{proof}
As before, denote $\Psi(u) = Tu+c$. Observe that
\begin{equation}
\label{eq:Phi}
\Phi_H(u; G, f, b, n) = Tu+c+G\Hb(Tu+c-u) = (\Iterator) u+ G \Hb c+c
\end{equation}
The spectral norm $\lVert \cdot \rVert_2$ is convex with respect to its argument, and $(\Iterator)$ is linear in $H$. Thus, $\lVert \Iterator \rVert_2$ is convex in $\Hb$ as well. Thus, under the condition that $\lVert \Iterator \rVert_2 < 1$, the set of $\Hb$ must be convex because it is a sub-level set of the convex function $\lVert \Iterator \rVert_2$.

To prove that it is open, observe that $\lVert \cdot \rVert_2$ is a continuous function, so $\lVert \Iterator \rVert_2$ is a continuous map from $\Hb$ to the spectral radius of $\Phi_H$. If we consider the set of $H$ such that $\lVert \Iterator \rVert_2 < 1$, this set is the preimage of $(-\epsilon, 1)$ for any $\epsilon > 0$. As $(-\epsilon, 1)$ is open, its preimage must be open.

\end{proof}


\textbf{Proposition \ref{thm:generalization}.}
\textit{
\PropGeneralization
}

\begin{proof}
From Theorem~\ref{thm:spectral_radius} and Lemma~\ref{lemma:fixed_point}, our iterator is valid if and only if $\rho(\Iterator) < 1$. The iterator $\Iterator$ only depends on $A, G$, and is independent of the constant $c$ in \Eqref{eq:Phi}. Thus, the validity of the iterator is independent with $f$ and $b$. Thus, if the iterator is valid for some $f_0$ and $b_0$, then it is valid for any choice of $f$ and $b$.

\end{proof}

\section{Proof of Convergence of Jacobi Method}
\label{appendix:geometry}
\label{appendix:jacobi_with_G}

In \Secref{sec:jacobi_method}, we show that for Poisson equation, the update matrix $T = G(I-A)$. We now formally prove that $\rho(G(I-A)) < 1$ for any $G$.

For any matrix $T$, the spectral radius is bounded by the spectral norm: $\rho(T) \leq \lVert T \rVert_2$, and the equality holds if $T$ is symmetric.
Since $(I - A)$ is a symmetric matrix, $\rho(I-A) = \lVert I-A \rVert_2$.
It has been proven that $\rho(I - A) < 1$ \citep{frankel1950convergence}. Moreover, $\lVert G \rVert_2 = 1$. Finally, matrix norms are sub-multiplicative, so
\begin{equation}
    \rho(T) \leq \lVert G(I-A) \rVert_2 \leq \lVert G \rVert_2 \lVert I-A \rVert_2 < 1
\end{equation}
$\rho(T) < 1$ is true for any $G$. Thus, the standard Jacobi method is valid for the Poisson equation under any geometry.

\end{document}